\def\R{\mathbf R} 
\renewcommand{\le}{\leqslant}
\renewcommand{\ge}{\geqslant}
\newtheorem{theorem}{Theorem}[section]         
\newtheorem{lemma}{Lemma}[section]             
\newtheorem{definition}{Definition}[section]   
\title{Three positive solutions of a nonlinear Dirichlet problem with competing power nonlinearities}
\author{Vladimir Lubyshev}
\begin{document}

\maketitle

\begin{abstract}
This paper studies a nonlinear Dirichlet problem for the $p$-Laplacian operator with nonlinearity consisting of 
power components. The problem under consideration can be though of as a perturbation of the Ambrosetti-Brezis-Cerami problem with concave-convex nonlinearity. The combined effect of power components in the perturbed nonlinearity allows to establish a higher order multiplicity of positive solutions. We study properties of the perturbed energy functional and prove the existence of three positive solutions to the problem at hand.
\end{abstract}

\section{Introduction}

In this paper, we study the following nonlinear Dirichlet problem:
\begin{equation}\label{e:main}
\left\{
\begin{aligned}
 -\Delta_p u &= \lambda \left( |u|^{\alpha-2}u - |u|^{\gamma-2}u \right) + |u|^{\beta-2}u && \mbox{ in } \Omega, \\
 u &= 0 && \mbox{ on } \partial\Omega.
\end{aligned}
\right.
\end{equation}
where $\Omega\subset\mathbf R^N$ is a bounded smooth domain, $\lambda$ is a real parameter, and $\Delta_p u = \nabla 
\cdot (|\nabla u|^{p-2}\nabla u)$ is the $p$-Laplacian operator. Problem \eqref{e:main} is studied under the 
following hypothesis on the exponents:

\begin{itemize}
 \item[\bf(H0)] $1 < \alpha < p < \beta < \gamma < p^*$,
\end{itemize}
where $p^*$ is the critical Sobolev exponent defined by
\[
p^* := \begin{cases} pN/(N-p) & \mbox{ if } p < N, \\ \infty & \mbox{ if } p \ge N. \end{cases}
\]
Our goal is to show that problem \eqref{e:main} has at least three positive solutions for any $\lambda\in(0,\lambda^*)$ 
where $\lambda^*$ is some positive real number. 

Multiplicity of positive solutions to nonlinear Dirichlet problem
\begin{equation}\label{e:gen}
\left\{
\begin{aligned}
 -\Delta_p u &= f_\lambda(\cdot,u) && \mbox{ in } \Omega, \\
 u &= 0 && \mbox{ on } \partial\Omega
\end{aligned}
\right.
\end{equation}
has been extensively studied by many authors. In \cite{brezis}, problem \eqref{e:gen} was studied with $p=2$, bounded 
$\Omega$, and concave-convex nonlinearity $f_\lambda(x,u) = \lambda |u|^{\alpha-2}u + |u|^{\beta-2}u$, 
$1<\alpha<p<\beta\le p^*$, where the existence of two positive solutions 
for $\lambda\in(0,\lambda^*)$, $\lambda^*>0$, has been proved. This result was generalized for $p\ge2$ and/or other nonlinear operators, and for more general 
nonlinearities essentially preserving the concave-convex structure (cf. 
\cite{ilyasov,zhou,kyritsi,ambrosetti}). It is known that, at least for the case where $\Omega$ is the unit 
ball, two positive solutions for the concave-convex problem considered in \cite{brezis} is the maximum one can expect 
(cf. \cite{tang}).

Some recent papers that study higher order multiplicity of positive solutions to nonlinear problems are \cite{hung,zhao,bai}. In \cite{hung}, a one-dimensional problem \eqref{e:gen} with $p=2$ and 
$f_\lambda(x,u) = \lambda g(u)$ has been considered for $\Omega=(-1,1)$. Under the assumption that $g$ is concave on 
$(0,\gamma)$ and convex on $(\gamma,\infty)$, $\gamma>0$, as well as other assumptions such as $g$ having a unique 
positive zero and $\lim_{u\to\infty} g(u)/u = \infty$, the existence of three positive solutions has been proved for 
$\lambda\in(\lambda_*,\lambda^*)$, $\lambda^*>\lambda_*>0$. In \cite{zhao}, problem \eqref{e:gen} was 
considered for $p>N$ and $f_\lambda(x,u) = \lambda a(x) u^{-\gamma} + \lambda g(x,u)$, $\gamma>0$. Under some 
assumptions on the coefficients and the exponent $\gamma$, the existence of three positive solutions has been proved 
for $\lambda$ belonging to an open subinterval of $(0,\infty)$. Finally, in \cite{bai}, a one-dimensional problem 
\eqref{e:gen} (the authors considered a more general form of the differential operator) with $f_\lambda(x,u) = \lambda 
a(x)g(u)$ and $\Omega=(0,1)$ has been considered. Under various assumptions, including $g(u)\le 
u^{p-1}$ in a positive neighborhood of 0, the existence of three positive 
solutions has been shown for $\lambda$ belonging to an open subinterval of $(0,\infty)$.

Problem \eqref{e:main}, which we study in this paper, is different from that in \cite{brezis} in the presence of the 
dominating negative term $-\lambda |u|^{\gamma-2}u$ which is responsible for producing an extra positive solution. This leads to the following main result of this paper.

\begin{theorem}\label{t:main} Under hypothesis (H0), there exists an $\lambda^*>0$ such that for any 
$\lambda\in(0,\lambda^*)$, 
problem \eqref{e:main} has three distinct positive solutions $u_1,u_2,u_3\in C^{1,\mu}(\bar\Omega)$, $0<\mu<1$, 
such that
\[
 \max(J_\lambda(u_1), J_\lambda(u_3)) < 0 < J_\lambda(u_2)
\]
where
\[
 J_\lambda(v) = \frac 1p \int_\Omega |\nabla v|^p - \frac \lambda\alpha \int_\Omega |u|^\alpha - \frac 1\beta 
\int_\Omega |u|^\beta + \frac \lambda\gamma \int_\Omega |u|^\gamma
\]
is an energy functional for problem \eqref{e:main}.
\end{theorem}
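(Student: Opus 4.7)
The plan is to use a refined Nehari manifold method adapted to the four competing powers $\alpha<p<\beta<\gamma$, whose coefficients in $J_\lambda$ have the alternating signs $-,+,-,+$. The essential feature is that, for small $\lambda$ and positive $v$, the fibering map $t\mapsto J_\lambda(tv)$ admits three positive critical points: a small local minimum, a middle local maximum, and a larger local minimum. Correspondingly the stable stratum $\mathcal{N}_\lambda^+$ of the Nehari manifold $\mathcal{N}_\lambda=\{u\ne 0:\langle J_\lambda'(u),u\rangle=0\}$ splits into two disjoint components $\mathcal{N}_\lambda^{(1)}$ and $\mathcal{N}_\lambda^{(3)}$, while the unstable stratum $\mathcal{N}_\lambda^{(2)}=\mathcal{N}_\lambda^-$ sits between them; the three solutions will arise as minimizers of $J_\lambda$ on these three strata.

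First I would establish coercivity and weak lower semicontinuity of $J_\lambda$, which (unlike in the classical Ambrosetti--Brezis--Cerami setting) hold here because the positive term $\tfrac{\lambda}{\gamma}\|v\|_\gamma^\gamma$ dominates both $-\tfrac{1}{\beta}\|v\|_\beta^\beta$ (by H\"older, using $\beta<\gamma$) and the concave term for $\|v\|$ large, while the remaining $L^q$-terms are weakly continuous since $\alpha,\beta,\gamma<p^*$. Next I would perform the fibering analysis: for $v\ge 0$ with $\|v\|=1$ I would study $j_v(t)=J_\lambda(tv)$ and show that there is an open cone of directions for which $j_v'$ has exactly three positive zeros $t_1(v)<t_2(v)<t_3(v)$ provided $\lambda$ lies in some $(0,\lambda^*)$. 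This reduces to an elementary analysis of a one-variable polynomial-like equation in which one checks that a local maximum of a certain auxiliary function exceeds, and a local minimum falls below, $\lambda\|v\|_\alpha^\alpha$. With the fibering picture in hand I would decompose $\mathcal{N}_\lambda$ into the three strata above, show each is a $C^1$-submanifold away from the degenerate locus $\{j_v''(1)=0\}$, and obtain $u_i$ as a minimizer of $J_\lambda$ on $\mathcal{N}_\lambda^{(i)}$. The standard natural-constraint argument turns these constrained minimizers into free critical points of $J_\lambda$; positivity follows by testing against $u_i^-$ and invoking V\'azquez's strong maximum principle, and $C^{1,\mu}(\bar\Omega)$-regularity follows from Moser iteration combined with the Tolksdorf--Lieberman theory. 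The energy ordering $J_\lambda(u_1),J_\lambda(u_3)<0<J_\lambda(u_2)$ is forced by the fibering structure, since the two fiber minima give negative values of $j_v$ while the middle fiber maximum stays positive for $\lambda$ small.

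The main obstacle will be the calibration of $\lambda^*$ and the control of the degenerate locus $\{j_v''(1)=0\}$. One must show that for every $\lambda\in(0,\lambda^*)$ the three strata are simultaneously nonempty, that minimizing sequences on $\mathcal{N}_\lambda^{(2)}$ and $\mathcal{N}_\lambda^{(3)}$ stay uniformly away both from the origin and from the degenerate locus, and that the three resulting minimizers are pairwise distinct. This requires a quantitative comparison of the two local minima and the local maximum of $j_v$ across all admissible directions, and leverages essentially the ordering $\beta<\gamma$, which is precisely what creates the second valley in the fibering map for small $\lambda$.
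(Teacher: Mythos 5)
Your architecture coincides with the paper's for the first and third solutions: the paper also runs a fibering analysis of $t\mapsto J_\lambda(tv)$, identifies the open cone $\mathscr U_\lambda$ of directions whose fiber maps have three positive critical points $t_1<t_2<t_3$, and obtains $u_1$ and $u_3$ by minimizing $\hat J_\lambda(t_1(v),v)$ over $B_1\setminus\{0\}$ and $\hat J_\lambda(t_3(v),v)$ over $\mathscr U_\lambda\cap B_1$ --- this is your Nehari stratification in fibering clothing (the paper works on the closed unit ball rather than the sphere precisely to have weak compactness, and uses Pohozaev's regularity theorem to push the minimizer back onto $S_1$). The genuine divergence is the second solution: you propose to minimize $J_\lambda$ on the unstable stratum $\mathcal N_\lambda^{(2)}$, whereas the paper applies the mountain pass theorem to the truncated functional $\bar J_\lambda(v)=\frac1p\int_\Omega|\nabla v|^p+R_\lambda(v^+)$, checking the Palais--Smale condition directly from coercivity plus compactness of $(-\Delta_p)^{-1}DR_\lambda$. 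The mountain pass route is the safer one, for exactly the reason you flag but do not resolve: in this problem the degenerate locus $\{j_v''=0\}$ is genuinely nonempty --- it is the boundary of the cone $\mathscr U_\lambda$, where $t_1,t_2$ or $t_2,t_3$ coalesce --- unlike the classical concave--convex setting where one shows $\mathcal N_\lambda^0=\varnothing$ for small $\lambda$. A minimizing sequence on $\mathcal N_\lambda^{(2)}$ may drift to that boundary, and at a degenerate limit the natural-constraint (Lagrange multiplier) argument breaks down; your outline supplies no quantitative bound keeping $j_v''(t_2(v))$ away from zero along minimizing sequences, so this step is a real gap as written.

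Two smaller calibration points. Negativity of the infimum over the third stratum does not follow from the mere existence of the second fiber minimum: for generic admissible $v$ one can have $j_v(t_3(v))>0$, so you must exhibit at least one direction $w$ with $j_w(t_3(w))<0$, which is a smallness condition on $\lambda$ separate from the one guaranteeing three zeros of $j_v'$ (the paper's inequality \eqref{e:J_neg} versus \eqref{e:Phi_pos}). And distinctness of $u_1$ and $u_3$ cannot be read off the energies, since both are negative; it follows instead from $t_1(v)<t_3(v)$ on a common ray, as in the paper's final argument.
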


Many powerful methods exist in nonlinear analysis that help study the multiplicity of solutions to nonlinear differential equations. These methods include Morse theory, mountain pass lemma, fixed point theorems, and the Pohozaev fibering method to name a few (cf. \cite{chang,chang2,struwe,pohozaev_book} and references therein). The fibering method is especially useful when the nonlinearity contains polynomial components and will serve as the underlying method in this paper. For its applications to multiplicity of solutions to nonlinear equations and systems, cf. \cite{pohozaev2, bozhkov_mitidieri, ilyasov, lubyshev} and references therein.

This paper is organized as follows. Section \ref{sec:prelim} contains some known results to be used in the paper. 
Section \ref{sec:prop} studies properties of the energy functional, $J_\lambda$, necessary to prove Theorem 
\ref{t:main}. 
Sections \ref{sec:min1}--\ref{sec:min3} study three optimization problems, each leading to a positive solution of 
problem \eqref{e:main}. Finally, Section \ref{sec:proof} is dedicated to proving our main result, Theorem \ref{t:main}.

\section{Preliminaries}\label{sec:prelim}

In this section, we list two known results that help studying critical points of a functional defined on a Banach 
space. Throughout this section, differentiability always means Fr\'echet differentiability.

Let $(X,\|\cdot\|)$ be a Banach space, and let $B_\rho$ and $S_\rho$ be respectively its closed ball and sphere of 
radius $\rho$. We are concerned with critical points of a differentiable functional $J\colon X\to\R$. In other words, 
we want to study the equation
\begin{equation}\label{e:critpts}
 DJ(u) = 0, \qquad u\in X.
\end{equation}

\subsection{The fibering method}

In this subsection, we present the Fibering Method of S. Pohozaev \cite{pohozaev1,pohozaev2,pohozaev_book}. We will 
state its special form, the \emph{spherical fibering}, which is most suitable for our problem.

Suppose that the norm $\|\cdot\|$ of the Banach space $X$ is differentiable away from the origin and that
\[
 J = \frac 1p \|\cdot\|^p + R
\]
where $p>1$ and $R\colon X\to\mathbf R$ is a differentiable functional. 

Instead of studying critical points of the functional $J$ directly, constrained critical points of the fibering 
functional
\[
 I(v) = J(t(v)v)
\]
where $t=t(v)$ is a solution of
\[
 \frac \partial{\partial t} J(tv) = 0
\]
are studied on $S_1$. Under certain assumptions, a constrained critical point $v_*$ of $I$ corresponds to a critical point $u_*=t(v_*)v_*$ of the original functional $J$.

These constrained critical points typically arise as extrema of $I$ on $S_1$. But because $S_1$ is not weakly closed, 
the limit of a maximizing or minimizing sequence, if it exists, may no longer be in $S_1$. There is a regularity theorem that helps circumvent this difficulty. It reduces the problem to studying constrained critical points of an extension of $I$ to the unit ball. For reflexive $X$, this new problem is often more tractable because of the weak compactness of $B_1$.

A rigorous formulation of the (spherical) fibering method combined with the regularity theorem is presented below.

\begin{theorem}\label{t:spherical_fib} Let $\mathscr U$ be an open subset of $X$ with 
$\mathscr U\cap S_1\neq\varnothing$. Suppose that the equation
\begin{equation}\label{e:spherical_fib}
 t^{p-1} + DR(tv)v = 0
\end{equation}
has a solution $t\colon B_1\cap\mathscr U\to[0,\infty)$ that is differentiable on $(B_1\cap\mathscr U)\setminus\{0\}$. 
Consider the functional $I\colon (B_1\cap\mathscr U)\setminus\{0\}\to\mathbf R$ defined by
\[
 I(v) = \frac 1p t(v)^p + R(t(v)v)
\]
and let
\[
 \mathscr M := \{ v\in B_1\cap\mathscr U \mid t(v)\neq0 \}.
\]
Then for any critical point $v_*\in\mathscr M$ of $I$ on $B_1\cap\mathscr U$,
\begin{itemize}
 \item[\rm(a)] $v_*\in S_1$;
 \item[\rm(b)] $u_*=t(v_*)v_*$ is a critical point of $J$, provided that $DH(v_*)v_*\neq0$ where $H:=\|\cdot\|$.
\end{itemize}
\end{theorem}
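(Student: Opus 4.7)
The plan is to differentiate $I$, use the defining relation for $t$ to eliminate the $Dt$ contribution, and then combine a Lagrange-multiplier argument with Euler's identity for the norm. Concretely, for $v\in\mathscr{M}$ the chain rule gives
\[
DI(v)w = \bigl(t(v)^{p-1}+DR(t(v)v)v\bigr)\,Dt(v)w + t(v)\,DR(t(v)v)w,
\]
and the first summand vanishes by the defining equation \eqref{e:spherical_fib} for $t$. Hence $DI(v) = t(v)\,DR(t(v)v)$ in $X^*$. I note that $I$ is genuinely differentiable on $\mathscr{M}$: plugging $v=0$ into \eqref{e:spherical_fib} forces $t(0)=0$, so $0\notin\mathscr{M}\subset(B_1\cap\mathscr{U})\setminus\{0\}$, which is where $t$ is assumed differentiable.

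For (a), I argue by contradiction. Suppose $\|v_*\|<1$. Since $\mathscr{U}$ is open, $v_*$ then lies in the topological interior of $B_1\cap\mathscr{U}$, so being a critical point there forces $DI(v_*)=0$ as a functional. Combined with $t(v_*)\neq 0$, this yields $DR(u_*)=0$, where $u_*:=t(v_*)v_*$. Evaluating \eqref{e:spherical_fib} at $v_*$ now gives $t(v_*)^{p-1}=-DR(u_*)v_*=0$, contradicting $v_*\in\mathscr{M}$. Hence $v_*\in S_1\cap\mathscr{U}$.

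For (b), since $v_*$ is a constrained critical point on the smooth submanifold $S_1\cap\mathscr{U}$ (smoothness ensured by $DH(v_*)\neq 0$, itself a consequence of the hypothesis $DH(v_*)v_*\neq 0$), the Lagrange multiplier rule gives $\mu\in\R$ with $DI(v_*)=\mu\,DH(v_*)$, i.e.\ $t(v_*)\,DR(u_*)=\mu\,DH(v_*)$. Apply this identity to $v_*$: using $DR(u_*)v_*=-t(v_*)^{p-1}$ from \eqref{e:spherical_fib} and Euler's identity $DH(v_*)v_*=\|v_*\|=1$ (valid since $H$ is $1$-homogeneous and differentiable at $v_*\neq 0$), I solve $\mu=-t(v_*)^p$. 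Finally, because $\|u_*\|=t(v_*)$ and $DH$ is $0$-homogeneous (so $DH(u_*)=DH(v_*)$),
\[
DJ(u_*) = \|u_*\|^{p-1}DH(u_*)+DR(u_*) = \Bigl(t(v_*)^{p-1}+\tfrac{\mu}{t(v_*)}\Bigr)DH(v_*) = 0,
\]
so $u_*$ is a critical point of $J$.

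The main obstacle I foresee is bookkeeping rather than substance: one must be precise about what ``critical point of $I$ on $B_1\cap\mathscr{U}$'' means, separating cleanly the interior case (unconstrained $DI=0$) from the boundary case (Lagrange multiplier on $S_1$), and one must verify the differentiability of $t$ and hence of $I$ at $v_*$ before applying the chain rule. The hypothesis $DH(v_*)v_*\neq 0$ is exactly what makes the scalar equation for $\mu$ solvable in (b); granted Euler's identity, it is automatic for $v_*\in S_1$, but it remains the crucial algebraic fact that makes $DJ(u_*)$ telescope to zero.
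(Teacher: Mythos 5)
The paper states Theorem~\ref{t:spherical_fib} without proof, as a citation of Pohozaev's spherical fibering method, so there is no in-paper argument to compare against. Your proof is correct and is the standard derivation: the cancellation of the $Dt(v)w$ term via \eqref{e:spherical_fib} gives $DI(v)=t(v)\,DR(t(v)v)$, which simultaneously forces interior critical points in $\mathscr M$ onto $S_1$ (part (a), the ``regularity'' half of the method) and, combined with the Lagrange rule, Euler's identity $DH(v_*)v_*=\|v_*\|$, and the $0$-homogeneity of $DH$, makes $DJ(u_*)$ telescope to zero (part (b)). Your explicit handling of the two readings of ``critical point on $B_1\cap\mathscr U$'' (interior versus constrained on the sphere) and your observation that $t(0)=0$ so that $0\notin\mathscr M$ are exactly the points that need care; I see no gap.
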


\subsection{Mountain pass theorem}

Another result in the critical point theory that we will use is the Mountain Pass Theorem by A. Ambrosetti and P. 
Rabinowitz \cite{ambrosetti-rabinowitz}. For that theorem to be applied, a certain compactness condition needs to be 
satisfied, which is introduced is the definitions below.

\begin{definition}\rm Given $c\in\mathbf R$, a sequence $(u_n)\subset X$ is called a Palais-Smale sequence at level $c$ 
($\mathrm{(PS)}_c$-sequence, in short) for the differentiable functional $J\colon X\to\mathbf R$ if
\[
 J(u_n) \to c \qquad \mbox{ and } \qquad DJ(u_n) \to 0 \,\mbox{ in } X^*.
\]
\end{definition}

\begin{definition}\rm A differentiable functional $J\colon X\to\mathbf R$ is said to satisfy the Palais-Smale condition 
if given $c\in\mathbf R$, every $\mathrm{(PS)}_c$-sequence for $J$ contains a (strongly) convergent subsequence.
\end{definition}

The theorem is question is presented below.

\begin{theorem}\label{t:mountain_pass} Let $J\in C^1(X,\mathbf R)$ satisfy the Palais-Smale condition and let
\[
 \max(J(0),J(w)) < \inf_{S_\rho} J
\]
for some $\|w\|>\rho>0$. Then $J$ has a critical point $u_*$ such that
\[
 J(u_*) = \inf_{\gamma\in\Gamma} \max_{t\in[0,1]} J(\gamma(t))
\]
where
\[
 \Gamma := \{ \gamma\in C([0,1],X) \mid \gamma(0)=0\;\mbox{ and }\;\gamma(1)=w \}.
\]
\end{theorem}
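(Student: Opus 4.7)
The plan is to follow the classical Ambrosetti--Rabinowitz argument, proving the theorem by contradiction using a deformation obtained from a pseudo-gradient flow of $J$. Define the candidate critical value
\[
 c := \inf_{\gamma\in\Gamma} \max_{t\in[0,1]} J(\gamma(t)).
\]
The first step is to check that $c$ is well-defined and nontrivial. Since every $\gamma\in\Gamma$ starts at $0$ and ends at $w$ with $\|w\|>\rho$, continuity forces $\gamma([0,1])$ to meet $S_\rho$, so $\max_{t} J(\gamma(t))\ge \inf_{S_\rho} J$, which gives $c \ge \inf_{S_\rho} J > \max(J(0),J(w))$. In particular $c$ is finite and strictly separated from the endpoint values.

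The second step is the key analytic input: construct a locally Lipschitz pseudo-gradient vector field $V\colon \{DJ\neq 0\}\to X$ with $\|V(u)\|\le 2\|DJ(u)\|_{X^*}$ and $\langle DJ(u),V(u)\rangle \ge \|DJ(u)\|_{X^*}^2$. Assuming, for contradiction, that there is no critical point at level $c$, the Palais--Smale condition yields a uniform bound $\|DJ(u)\|_{X^*}\ge \delta>0$ on a strip $\{|J-c|<2\varepsilon\}$ for some small $\varepsilon>0$ (otherwise a $(\mathrm{PS})_c$-sequence would produce a critical point at level $c$). Cutting off $V$ outside this strip and near the set where $DJ=0$, and solving the ODE $\dot\eta = -\chi(\eta)V(\eta)/\|V(\eta)\|^2$ (or a suitable truncation of it), produces a continuous deformation $\eta\colon [0,1]\times X\to X$ such that $\eta(s,\cdot)$ is the identity outside $\{|J-c|<2\varepsilon\}$ and $\eta(1,\{J\le c+\varepsilon\})\subset\{J\le c-\varepsilon\}$.

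The final step is to derive the contradiction. Choose $\gamma_0\in\Gamma$ with $\max_t J(\gamma_0(t))\le c+\varepsilon$. Because $J(0)$ and $J(w)$ are both strictly below $c-\varepsilon$ (take $\varepsilon$ small enough so that $\max(J(0),J(w)) < c-\varepsilon$), the deformation fixes the endpoints of $\gamma_0$, so $\tilde\gamma(t) := \eta(1,\gamma_0(t))$ is still in $\Gamma$. But now $\max_t J(\tilde\gamma(t)) \le c-\varepsilon$, contradicting the definition of $c$. Hence there is $u_*$ with $DJ(u_*)=0$ and $J(u_*)=c$.

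The main obstacle is the construction of the deformation $\eta$: one needs the pseudo-gradient lemma (relying on paracompactness of the metric space $\{DJ\neq 0\}$) together with a careful cutoff so that the flow is globally defined and leaves the endpoints of admissible paths fixed. The Palais--Smale hypothesis is precisely what prevents $\|DJ\|$ from degenerating to $0$ on the strip $\{|J-c|<2\varepsilon\}$ and thus is what makes the deformation effective; everything else is standard ODE machinery on a Banach space.
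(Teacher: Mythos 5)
This theorem is stated in the paper as a known preliminary result, attributed to Ambrosetti and Rabinowitz, and the paper offers no proof of it; your sketch is the standard deformation-lemma argument from that classical source and is correct in outline. The only point worth tightening is quantitative: the endpoints $0$ and $w$ must lie outside the full strip $\{|J-c|<2\varepsilon\}$ on which the flow acts (so one should arrange $\max(J(0),J(w))<c-2\varepsilon$), and $\varepsilon$ must also be taken small relative to $\delta$ so that the truncated flow actually lowers $J$ by $2\varepsilon$ in unit time --- both are trivially arranged and standard.
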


\section{Some properties of the energy functional}\label{sec:prop}

It will be convenient for us to represent $J_\lambda$ as
\[
 J_\lambda(v) = \frac 1p \int_\Omega |\nabla v|^p - \frac \lambda\alpha A(v) - \frac 1\beta B(v) + \frac \lambda\gamma 
C(v)
\]
where
\[
 A(u) := \int_\Omega |u|^\alpha, \qquad B(u) := \int_\Omega |u|^\beta, \qquad C(u) := \int_\Omega |u|^\gamma.
\]

\begin{lemma}\label{l:coercive} The energy functional $J_\lambda$ is coercive for any $\lambda>0$.
\end{lemma}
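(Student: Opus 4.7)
The plan is to exploit the sign structure of $J_\lambda$: the leading term $\tfrac1p\|v\|^p$ (with $\|v\|=(\int_\Omega|\nabla v|^p)^{1/p}$ the $W^{1,p}_0$ norm) is positive, the top-degree correction $\tfrac{\lambda}{\gamma}C(v)$ is also positive, and the two negative terms $-\tfrac{\lambda}{\alpha}A(v)$ and $-\tfrac{1}{\beta}B(v)$ can both be absorbed — the first by the principal term because $\alpha<p$, and the second by the $C$-term because $\beta<\gamma$.

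First I would dispose of $A(v)$ using the Sobolev embedding $W^{1,p}_0(\Omega)\hookrightarrow L^\alpha(\Omega)$ (which holds since $\alpha<p<p^*$ and $\Omega$ is bounded), obtaining a constant $c_\alpha$ with $A(v)\le c_\alpha\|v\|^\alpha$. Since $\alpha<p$, the term $\tfrac{\lambda c_\alpha}{\alpha}\|v\|^\alpha$ is of lower order than $\tfrac1p\|v\|^p$, so it is harmless for large $\|v\|$.

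The main step is controlling $B(v)$ by $C(v)$. Because $p<\beta<\gamma$ and $\Omega$ is bounded, Hölder's inequality gives
\[
B(v)=\int_\Omega |v|^\beta \le |\Omega|^{1-\beta/\gamma}\,C(v)^{\beta/\gamma},
\]
and since $\beta/\gamma<1$, Young's inequality $ab\le \varepsilon a^{\gamma/\beta}+C_\varepsilon b^{\gamma/(\gamma-\beta)}$ yields, for any $\varepsilon>0$, a constant $K_\varepsilon$ (depending on $\varepsilon$, $\beta$, $\gamma$, $|\Omega|$) such that $B(v)\le \varepsilon C(v)+K_\varepsilon$. Choosing $\varepsilon>0$ small enough that $\tfrac{\varepsilon}{\beta}\le \tfrac{\lambda}{\gamma}$, the combined contribution of the $B$- and $C$-terms is bounded below by $-K_\varepsilon/\beta$.

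Combining these estimates gives
\[
J_\lambda(v)\ge \frac{1}{p}\|v\|^p - \frac{\lambda c_\alpha}{\alpha}\|v\|^\alpha - \frac{K_\varepsilon}{\beta},
\]
and the right-hand side tends to $+\infty$ as $\|v\|\to\infty$ because $\alpha<p$. I do not foresee a serious obstacle here; the only point requiring any care is the Hölder/Young interpolation for the middle term, which is the structural reason why the perturbation $+\tfrac{\lambda}{\gamma}C(v)$ suffices to tame the supercritical-looking (relative to $p$) term $-\tfrac{1}{\beta}B(v)$.
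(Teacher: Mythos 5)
Your proof is correct and follows essentially the same route as the paper: both arguments rest on the observation that, since $\Omega$ is bounded and $\gamma$ is the largest exponent, H\"older's inequality lets the term $\frac{\lambda}{\gamma}C(v)$ dominate $-\frac{1}{\beta}B(v)$ up to an additive constant, after which the remaining lower-order term is swamped by $\frac{1}{p}\|v\|^p$. The only cosmetic difference is that the paper absorbs the $A$-term into the $L^\gamma$ norm as well (showing the whole perturbation $R_\lambda$ is bounded below by a constant), whereas you absorb it into the gradient term via the Sobolev embedding; both are valid.
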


\begin{proof}

It suffices to show that the functional
\begin{equation}\label{e:R_lambda}
 R_\lambda(v) := -\frac \lambda\alpha A(v) - \frac 1\beta B(v) + \frac \lambda\gamma C(v)
\end{equation}
is bounded from below on $W_0^{1,p}(\Omega)$. It is readily seen that
\[
 R_\lambda(v) \ge \frac \lambda\gamma \|v\|_{L^\gamma}^\gamma - c_1 \|v\|_{L^\gamma}^\beta - \lambda c_2 
\|v\|_{L^\gamma}^\alpha
\]
for some constants $c_1$ and $c_2$ independent of $v$. We complete the proof by noticing that the function
\[
 s \mapsto \frac \lambda\gamma s^\gamma - c_1 s^\beta - \lambda c_2 s^\alpha
\]
is bounded from below on $[0,\infty)$.

\end{proof}

Let
\[
 \hat J_\lambda(t,v) := \frac 1p t^p + R_\lambda(tv) \qquad \mbox{ and } \qquad \Phi_\lambda(t,v) := t^{-\alpha} \frac \partial{\partial t} \hat J_\lambda(t,v).
\]
Clearly, $\hat J_\lambda(t,v) = J_\lambda(tv)$ for $v\in S_1$. Consider the equation
\begin{equation}\label{e:bif_eq}
 \Phi_\lambda(t,v) = t^{p-\alpha} - t^{\beta-\alpha} B(v) + \lambda t^{\gamma-\alpha} C(v) - \lambda A(v) = 0,
 \qquad t > 0.
\end{equation}
Because of our assumption of the exponents, (H0), this equation has at most three solutions. Set
\[
 \mathscr U_\lambda := \{ v \in W_0^{1,p}(\Omega) \mid \mbox{ equation \eqref{e:bif_eq} has three solutions in $t$} \}.
\]
Given $v \in W_0^{1,p}(\Omega)$, the $i^{th}$ solution of \eqref{e:bif_eq} will be denoted $t_i(v) = t_{i,\lambda}(v)$, 
where the symbol $\lambda$ will be dropped for simplicity.

\begin{lemma}\label{l:J_prop} There exists an $\Lambda>0$ such that for any $\lambda\in(0,\Lambda)$, the following hold.
\begin{itemize}
\item[\rm(a)] $\mathscr U_\lambda\cap S_1\neq\varnothing$.
\item[\rm(b)] $\mathscr U_\lambda$ is open in $W_0^{1,p}(\Omega)$.
\item[\rm(c)] For any $v\in\mathscr U_\lambda\cap B_1$,
\[
 \hat J_\lambda(t_1(v),v) < 0 < \hat J_\lambda(t_2(v), v).
\]
There exists an $w\in \mathscr U_\lambda \cap S_1$ such that
\[
 J_\lambda(t_3(w)w) < 0.
\]
\end{itemize}
\end{lemma}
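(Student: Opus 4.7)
The plan is to treat $\phi(t):=\Phi_\lambda(t,v)$ as a one-variable function and to exploit the factorization $\phi'(t)=t^{p-\alpha-1}g(t)$ with
\[
g(t)=(p-\alpha)-(\beta-\alpha)B(v)t^{\beta-p}+\lambda(\gamma-\alpha)C(v)t^{\gamma-p}.
\]
Since $g(0),g(+\infty)>0$ and $g$ has a unique positive minimum, it admits at most two positive zeros, so $\phi$ has at most a local maximum $\tau_1$ followed by a local minimum $\tau_2$. Combined with $\phi(0)=-\lambda A(v)<0$ and $\phi(+\infty)=+\infty$, this shows that $\phi$ has exactly three positive zeros iff $\phi(\tau_1)>0>\phi(\tau_2)$; in that case $t_1<\tau_1<t_2<\tau_2<t_3$ and every zero is simple.

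For part (a) I would pick any $v_0\in S_1$ and compare $\phi(\cdot,v_0)$ to $\phi_0(t):=t^{p-\alpha}-B(v_0)t^{\beta-\alpha}$, which has a strictly positive maximum on $(0,\infty)$ and decays to $-\infty$; for small $\lambda>0$, $\phi(\cdot,v_0)$ inherits a positive value at the maximum of $\phi_0$ and a negative value at some fixed large $T$, while $\phi(0,v_0)<0$ and $\phi\to+\infty$, so three intermediate-value zeros appear. Part (b) is then the implicit function theorem applied at each of the three simple zeros of $\phi(\cdot,v_0)$, combined with the global ``at most three positive zeros'' bound to exclude new zeros for nearby $v$. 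For the first inequality of part (c), $\partial_t\hat J_\lambda(t,v)$ has the same sign as $\Phi_\lambda(t,v)$ for $t>0$ and is strictly negative throughout $(0,t_1)$ (any $v\in\mathscr U_\lambda$ is nonzero, so $\Phi_\lambda(0,v)=-\lambda A(v)<0$); hence $\hat J_\lambda$ strictly decreases from $\hat J_\lambda(0,v)=0$ to $\hat J_\lambda(t_1(v),v)<0$.

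The main obstacle is to prove $\hat J_\lambda(t_2(v),v)>0$ uniformly on $\mathscr U_\lambda\cap B_1$. Using $\Phi_\lambda(t_2,v)=0$ to eliminate $B(v)t_2^\beta$ yields
\[
\hat J_\lambda(t_2,v)=\tfrac{\beta-p}{p\beta}t_2^p-\tfrac{\beta-\alpha}{\alpha\beta}\lambda A(v)t_2^\alpha-\tfrac{\gamma-\beta}{\beta\gamma}\lambda C(v)t_2^\gamma,
\]
and I would show the positive first term dominates by splitting on the sign of $t_2-\bar t(v)$, where $\bar t(v):=B(v)^{-1/(\beta-p)}$. When $t_2\le\bar t$, the identity $\Phi_\lambda(t_2,v)=0$ forces $\lambda C(v)t_2^{\gamma-p}\le\lambda A(v)t_2^{\alpha-p}$, consolidating both corrections into a single $O(\lambda)$ term controllable by the lower bound $t_2\ge\tau_1\ge((p-\alpha)/((\beta-\alpha)B(v)))^{1/(\beta-p)}$ together with the Sobolev bounds $A(v),B(v),C(v)\lesssim 1$ on $B_1$. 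When $t_2>\bar t$, a dual substitution trades the $\gamma$-correction for $B(v)t_2^\beta$ and produces
\[
\hat J_\lambda(t_2,v)=\tfrac{\gamma-p}{p\gamma}t_2^p-\tfrac{\gamma-\beta}{\beta\gamma}B(v)t_2^\beta-\tfrac{\gamma-\alpha}{\alpha\gamma}\lambda A(v)t_2^\alpha,
\]
whose positivity reduces to an upper bound on $B(v)t_2^{\beta-p}$ extracted from $t_2<\tau_2$ and the defining equation $g(\tau_2,v)=0$. Reconciling the two case estimates pins down the threshold $\Lambda$, and this is where the bulk of the technical work lives.

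For the last claim, take $w:=v_0$ from part (a). The equation $\Phi_\lambda(t_3,v_0)=0$ is dominated for small $\lambda$ by the balance $B(v_0)t_3^{\beta-p}\sim\lambda C(v_0)t_3^{\gamma-p}$, which forces $t_3\sim\lambda^{-1/(\gamma-\beta)}\to+\infty$ and hence $\lambda C(v_0)t_3^{\gamma-p}\to+\infty$; substituting this scaling into the first of the displayed identities makes the negative $\gamma$-term overwhelm the rest, so $J_\lambda(t_3(v_0)v_0)=\hat J_\lambda(t_3(v_0),v_0)<0$ for $\lambda$ small enough.
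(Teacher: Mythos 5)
Your overall strategy is sound and parts (a), (b), the first inequality of (c), and the final claim about $t_3(w)$ all match or legitimately replace the paper's arguments: the paper proves openness via the sign conditions on the pair $T_\lambda(t,v)=(\Phi_\lambda(t,v),\partial_t\Phi_\lambda(t,v))$ at two test points $s_1<s_2$, while you use the implicit function theorem at the three simple zeros plus the at-most-three-roots bound, which is equivalent; for the last claim the paper tracks the minimum of the left side of \eqref{e:J_neg} versus the first zero of $t\mapsto J_\lambda(tw)$ as $\lambda\downarrow0$, while you track the scaling $t_3\sim\lambda^{-1/(\gamma-\beta)}$, which also works. One remark on (a): the paper does not take an arbitrary $v_0\in S_1$ but the maximizer $v^*$ of $B$ on $S_1$ (via Rellich--Kondrachov); this is what makes \eqref{e:Phi_pos} and \eqref{e:J_pos} hold uniformly for every $v\in B_1$ and is reused in Sections 4--6, so your ``any $v_0$'' should be upgraded to that choice.

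The genuinely different step is $\hat J_\lambda(t_2(v),v)>0$. The paper drops the positive $\gamma$-term, compares $\hat J_\lambda$ with $H(t,v)=\frac{t^p}p-\lambda\frac{t^\alpha}\alpha A(v)-\frac{t^\beta}\beta B(v)$, proves the interlacing $t_1<\tilde t_1<\tilde t_2<t_2$ of the roots of the two-term equation $\tilde\Phi_\lambda=0$, and uses monotonicity of $\hat J_\lambda(\cdot,v)$ on $[t_1,t_2]$ to reduce everything to the classical positivity $\max_t H(t,v)>0$ guaranteed by \eqref{e:J_pos} --- no case analysis, and uniformity over $B_1$ is automatic. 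Your Nehari-type identities are correct and your first case ($t_2\le\bar t$) closes, but in the second case the mechanism you name does not produce what you need: $g(\tau_2,v)=0$ reads $(\beta-\alpha)B(v)\tau_2^{\beta-p}=(p-\alpha)+\lambda(\gamma-\alpha)C(v)\tau_2^{\gamma-p}$, which is a \emph{lower} bound on $B(v)\tau_2^{\beta-p}$ unless you can separately control $\lambda C(v)\tau_2^{\gamma-p}$, and $t_2<\tau_2$ pushes the bound in the wrong direction. The correct source of the upper bound is the pair $\Phi_\lambda(t_2,v)=0$ and $g(t_2,v)\le0$ (i.e.\ $t_2$ lies between the two zeros of $g$): combining them gives $\lambda C(v)t_2^{\gamma-p}\le(\beta-p)/(\gamma-\beta)$ and hence $B(v)t_2^{\beta-p}\le(\gamma-p)/(\gamma-\beta)$, which is strictly below the critical value $\beta(\gamma-p)/(p(\gamma-\beta))$, after which the remaining $O(\lambda)$ term is absorbed as in your first case. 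With that repair your route goes through, but the paper's comparison with $H$ is shorter and is the argument you should be measured against.
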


\begin{proof}

(a): By the Rellich-Kondrachov theorem, there is an $v^*\in S_1$ such that
\[
 B(v^*) = \max_{v\in S_1} B(v).
\]
Choose $\Lambda>0$ so that
\begin{equation}\label{e:Phi_pos}
 \max_{t>0} \left[ t^{p-\alpha} - t^{\beta-\alpha} B(v^*) \right] > \Lambda \max_{v\in S_1} A(v)
\end{equation}
and
\[
 \min_{t>0} \left[ t^{p-\alpha} - t^{\beta-\alpha} B(v^*) + \Lambda\, t^{\gamma-\alpha} C(v^*) \right] < 0.
\]
Then it is clear that $v^*\in\mathscr U_\lambda$ for all $\lambda\in(0,\Lambda)$.

(b): Consider the map
\[
 T_\lambda(t,v) := \left( \Phi_\lambda(t,v), \frac \partial{\partial t} \Phi_\lambda(t,v) \right).
\]
We will write $T_\lambda(t,v)>0$ (resp., $T_\lambda(t,v)<0$) to mean that the two components of $T_\lambda(t,v)$ are 
strictly positive (resp., strictly negative).

It is readily seen that $v\in\mathscr U_\lambda$ if and only if
\[
 T_\lambda(s_1,v) > 0 \qquad \mbox{ and } \qquad T_\lambda(s_2,v) < 0
\]
for some $0<s_1<s_2<\infty$. Now fix any $v_0\in\mathscr U_\lambda$ and choose $0<s_1^0<s_2^0<\infty$ with
\[
 T_\lambda(s_1^0,v_0) > 0 \qquad \mbox{ and } \qquad T_\lambda(s_2^0,v_0) < 0.
\]
Since $T_\lambda(t,\cdot)$ is continuous on $W_0^{1,p}(\Omega)$ for any $t>0$, there is a neighborhood $\mathscr 
N_{v_0}$ of $v_0$ in $W_0^{1,p}(\Omega)$ such that
\[
 T_\lambda(s_1^0,v) > 0 \qquad \mbox{ and } \qquad T_\lambda(s_2^0,v) < 0 \qquad
 \mbox{ for all } v \in \mathscr N_{v_0}.
\]
This implies that $\mathscr N_{v_0} \subset \mathscr U_\lambda$. Since $v_0$ was arbitrary, we conclude that 
$\mathscr U_\lambda$ is open.

(c): Reduce $\Lambda$, if necessary, so that
\begin{equation}\label{e:J_pos}
 \max_{t>0} \left[ \frac {t^{p-\alpha}}p - \frac {t^{\beta-\alpha}}\beta B(v^*) \right] > \frac \Lambda\alpha 
\max_{v\in S_1} A(v)
\end{equation}
and
\begin{equation}\label{e:J_neg}
 \min_{t>0} \left[ \frac {t^p}p - \frac {t^\beta}\beta B(v^*) + \Lambda \frac {t^\gamma}\gamma C(v^*) \right] < 0.
\end{equation}
Fix any $v\in \mathscr U_\lambda\cap B_1$. Then the inequality
\[
 \hat J_\lambda(t_1(v),v) < 0
\]
follows from the fact that $\alpha < \min(p,\beta,\gamma)$. It is clear that
\begin{equation}\label{e:J_incr}
 t \mapsto \hat J_\lambda(t,v) \qquad \mbox{ is increasing on } [t_1(v),t_2(v)].
\end{equation}

By \eqref{e:Phi_pos}, for any $v\in B_1\setminus\{0\}$, the equation
\[
 \tilde\Phi_\lambda(t,v) := t^{p-\alpha} - t^{\beta-\alpha} B(v) - \lambda A(v) = 0
\]
has exactly two solutions, $0 < \tilde t_1(v) < \tilde t_2(v) < \infty$.

Let us show that
\begin{equation}\label{e:t_vs_ttilde}
 t_1(v) < \tilde t_1(v) < \tilde t_2(v) < t_2(v).
\end{equation}
Since $\Phi_\lambda(t,v) > \tilde\Phi_\lambda(t,v)$ for all $t>0$,
\[
 \Phi_\lambda(\cdot,v)>0 \qquad \mbox{ on } [\tilde t_1(v),\tilde t_2(v)].
\]
Therefore, either $[\tilde t_1(v),\tilde t_2(v)] \subset (t_1(v),t_2(v))$ or $[\tilde t_1(v),\tilde t_2(v)] \subset 
(t_3(v),\infty)$. We want to verify that the second option is impossible. Indeed, since $\frac \partial{\partial t} 
\Phi_\lambda(t,v) > \frac \partial{\partial t} \tilde\Phi_\lambda(t,v)$ for $t>0$ and $\frac \partial{\partial t} 
\tilde\Phi_\lambda(t,v) > 0$ on $(0,\tilde t_1(v)]$,
\[
 \Phi_\lambda(\cdot,v) \qquad \mbox{ is increasing on } (0,\tilde t_1(v)].
\]
This implies that $\tilde t_1(v) < t_2(v)$, finishing the proof of \eqref{e:t_vs_ttilde}.

Entertaining \eqref{e:J_incr}, \eqref{e:t_vs_ttilde}, we deduce that
\[
 \hat J_\lambda(t_2(v),v) > \hat J_\lambda(\tilde t_2(v),v) \ge H(\tilde t_2(v),v)
\]
where
\[
 H(t,v) := \frac {t^p}p - \lambda \frac {t^\alpha}\alpha A(v) - \frac {t^\beta}\beta B(v).
\]
But by \eqref{e:J_pos},
\[
 H(\tilde t_2(v),v) = \max_{t>0} H(t,v) > 0,
\]
yielding that $\hat J_\lambda(t_2(v),v) > 0$.

Finally, take $w=v^*$. The minimum point for the left hand side of \eqref{e:J_neg} is bounded from below by a positive constant as 
$\Lambda\downarrow0$, whereas the first zero of $t\mapsto J_\lambda(tw)$ is $o(1)$ as $\lambda\downarrow0$. This means 
that we can decrease $\Lambda$, if necessary, so that the third critical value of $t\mapsto J_\lambda(tw)$ is negative 
or, equivalently, $J_\lambda(t_3(w)w) < 0$.

\end{proof}

\section{First critical point}\label{sec:min1}

Let $\Lambda>0$ be as in part (b) of Lemma \ref{l:J_prop}. In view of inequality \eqref{e:Phi_pos}, equation 
\eqref{e:bif_eq} has the minimal solution $t=t_1(v)$ for any $v\in B_1\setminus\{0\}$. Moreover,
\begin{equation}\label{e:min1}
 \left. \frac \partial{\partial t} \right|_{t=t_1(v)} \Phi_\lambda(t,v) > 0.
\end{equation}

\begin{lemma}\label{l:c1_dif} The functional $t_1$ is continuously differentiable and weakly continuous on 
$B_1\setminus\{0\}$.
\end{lemma}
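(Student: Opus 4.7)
I would split the proof into two independent parts, first establishing $C^1$-smoothness via the implicit function theorem, then weak continuity via compactness of subcritical embeddings. The crucial input for both is the non-degeneracy \eqref{e:min1}.

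\textbf{Continuous differentiability.} The map $\Phi_\lambda\colon (0,\infty)\times W_0^{1,p}(\Omega)\to\mathbf{R}$ is jointly $C^1$: smoothness in $t$ is obvious, and smoothness in $v$ follows because the functionals $A$, $B$, $C$ are $C^1$ on $W_0^{1,p}(\Omega)$ under hypothesis (H0), since $\alpha,\beta,\gamma<p^*$. Fix any $v_0\in B_1\setminus\{0\}$. By \eqref{e:min1}, $\partial_t\Phi_\lambda(t_1(v_0),v_0)>0$, so the implicit function theorem yields an open neighborhood $\mathscr N_{v_0}$ of $v_0$ and a $C^1$ function $\tau\colon\mathscr N_{v_0}\to(0,\infty)$ with $\tau(v_0)=t_1(v_0)$ and $\Phi_\lambda(\tau(v),v)=0$. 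It then remains to show $\tau=t_1$ on a (possibly smaller) neighborhood; this will follow from the weak continuity argument below (or directly: if $\tau(v_n)>t_1(v_n)$ along some $v_n\to v_0$, one extracts a second root of $\Phi_\lambda(\cdot,v_0)$ below $t_1(v_0)$, contradicting minimality).

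\textbf{Weak continuity.} Let $v_n\rightharpoonup v$ with $v_n,v\in B_1\setminus\{0\}$. By Rellich-Kondrachov and subcriticality, $v_n\to v$ strongly in $L^\alpha$, $L^\beta$, and $L^\gamma$, so $A(v_n)\to A(v)>0$, $B(v_n)\to B(v)$, and $C(v_n)\to C(v)$. Set $t_n:=t_1(v_n)$. I would first show $(t_n)$ is bounded in $(0,\infty)$. As $t\downarrow 0$, $\Phi_\lambda(t,v_n)\to -\lambda A(v_n)\to -\lambda A(v)<0$, and as $t\to\infty$, the dominant term $\lambda t^{\gamma-\alpha}C(v_n)$ forces $\Phi_\lambda(t,v_n)\to+\infty$ uniformly in $n$ (uniform bounds on coefficients). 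A uniform quantitative version of these asymptotics gives $0<c\le t_n\le C<\infty$.

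\textbf{Identifying the limit.} Any subsequential limit $t^*$ of $(t_n)$ satisfies $\Phi_\lambda(t^*,v)=0$ by continuity of the coefficients. To show $t^*=t_1(v)$, I would use that $\Phi_\lambda(\cdot,v_n)<0$ on $(0,t_n)$, since $\Phi_\lambda(0,v_n)=-\lambda A(v_n)<0$ and $t_n$ is the smallest positive zero. Passing to the limit pointwise, $\Phi_\lambda(s,v)\le 0$ for every $s\in(0,t^*)$, so no positive root lies below $t^*$, forcing $t^*=t_1(v)$. Since every subsequence has a further subsequence converging to the same value, the whole sequence $t_1(v_n)$ converges to $t_1(v)$. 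The main potential obstacle is the uniform lower bound $t_n\ge c>0$, which is what uses $v\neq 0$ in an essential way; the rest of the argument is compactness plus continuity of the coefficients.
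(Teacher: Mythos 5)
Your proof is correct and follows exactly the route the paper intends: the paper's own proof is a single sentence citing \eqref{e:min1}, the Implicit Function Theorem, and the weak continuity of $A$, $B$, and $C$, and your argument is precisely that with the details filled in. The only step worth tightening is the inference from ``$\Phi_\lambda(\cdot,v)\le 0$ on $(0,t^*)$'' to ``no positive root lies below $t^*$'' --- a root at which $\Phi_\lambda$ stays $\le 0$ nearby is a priori compatible with that sign condition --- but \eqref{e:min1} applied to $v$ itself forces $\Phi_\lambda(\cdot,v)>0$ immediately past its first root, which rules this out and closes the argument.
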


\begin{proof}

The statement of the lemma follows from \eqref{e:min1}, the Implicit Function Theorem, and the weak continuity of 
$A,B$, and $C$.

\end{proof}

Define the functional $I_1\colon B_1\setminus\{0\}\to\mathbf R$ by
\[
 I_1(v) := \hat J_\lambda(t_1(v),v) < 0.
\]

\begin{theorem}\label{t:c1} For any $\lambda\in(0,\Lambda)$, there is a nonnegative $v_*\in S_1$ such that $u_* = t_1(v_*)v_*$ is a critical point of $J_\lambda$ with $J_\lambda(u_*)<0$.
\end{theorem}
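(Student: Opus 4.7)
The plan is to obtain $v_*$ as a weak limit of a minimizing sequence for $I_1$ on $B_1\setminus\{0\}$, and then invoke Theorem \ref{t:spherical_fib} to promote this minimizer to a critical point of $J_\lambda$ on $S_1$.

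First I would establish that $m_1 := \inf_{v\in B_1\setminus\{0\}} I_1(v)$ is finite and strictly negative. Boundedness from below is immediate from the decomposition $I_1(v) = \frac{1}{p}t_1(v)^p + R_\lambda(t_1(v)v)$ together with the lower bound on $R_\lambda$ proved in Lemma \ref{l:coercive}. The strict negativity $m_1<0$ comes from part (c) of Lemma \ref{l:J_prop}, which guarantees at least one $v\in B_1$ with $I_1(v)=\hat J_\lambda(t_1(v),v)<0$.

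Next I would take a minimizing sequence $(v_n)\subset B_1\setminus\{0\}$ and, using reflexivity of $W_0^{1,p}(\Omega)$, extract a weakly convergent subsequence $v_n\rightharpoonup v_*$ in $B_1$. The crucial step is to show $v_*\ne 0$. The functionals $A,B,C$ are weakly continuous on $W_0^{1,p}(\Omega)$ by the Rellich--Kondrachov theorem (since $\alpha,\beta,\gamma<p^*$), so if $v_n\rightharpoonup 0$ then $A(v_n),B(v_n),C(v_n)\to 0$. Reading off $\Phi_\lambda(t_1(v_n),v_n)=0$, this forces $t_1(v_n)\to 0$, and therefore $I_1(v_n)\to 0$, contradicting $I_1(v_n)\to m_1<0$. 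Hence $v_*\ne 0$, and by Lemma \ref{l:c1_dif} together with weak continuity of $A,B,C$, $I_1$ is weakly continuous on $B_1\setminus\{0\}$, so $I_1(v_*)=m_1$ and $v_*$ is a minimizer of $I_1$ on $B_1\setminus\{0\}$.

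Then I would verify that $v_*$ satisfies the hypotheses of Theorem \ref{t:spherical_fib} applied with $I=I_1$, $R=R_\lambda$, and $\mathscr U=W_0^{1,p}(\Omega)\setminus\{0\}$. Since $t_1(v_*)>0$, we have $v_*\in\mathscr M$, and because $v_*$ minimizes $I_1$ over an open set containing it (relative to $B_1$), it is a critical point of $I_1$ in the sense of Theorem \ref{t:spherical_fib}. The homogeneity of the norm gives $DH(v_*)v_*=\|v_*\|\ne 0$, so conclusions (a) and (b) apply: $v_*\in S_1$ and $u_*=t_1(v_*)v_*$ is a critical point of $J_\lambda$, with $J_\lambda(u_*)=\hat J_\lambda(t_1(v_*),v_*)=I_1(v_*)=m_1<0$. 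Finally, since $A,B,C$ and the $W_0^{1,p}$-norm are all invariant under $v\mapsto|v|$, one has $I_1(|v_*|)=I_1(v_*)$, so $v_*$ may be replaced by $|v_*|\ge 0$ without loss of generality. The main obstacle in the argument is the non-weak-closedness of $S_1$, which is precisely what the fibering-with-regularity framework (Theorem \ref{t:spherical_fib}) is designed to overcome; the only delicate verification left to us is ruling out the weak limit $v_*=0$, handled by the contradiction with $m_1<0$ above.
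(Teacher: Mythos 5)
Your proposal is correct and follows essentially the same route as the paper: minimize $I_1$ over $B_1\setminus\{0\}$, use coercivity and weak continuity of $A,B,C$ (hence of $t_1$ and $I_1$) to pass to the weak limit, rule out $v_*=0$ via $c_1(\lambda)<0$, and invoke Theorem \ref{t:spherical_fib} to land on $S_1$ and recover a critical point of $J_\lambda$. The only differences are cosmetic (you exclude $v_*=0$ by contradiction through $t_1(v_n)\to0$ rather than by first fixing $t_*>0$, and you symmetrize via $|v_*|$ at the end instead of taking a nonnegative minimizing sequence at the outset).
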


\begin{proof}

Consider the problem
\begin{equation}\label{e:c1}
 c_1(\lambda) := \inf_{v\in B_1\setminus\{0\}} I_1(v).
\end{equation}
Let $(v_n)$ be its minimizing sequence, which we may assume to be nonnegative. According to Lemma \ref{l:coercive}, 
$t_{1,n} := t_1(v_n)$ is a bounded sequence. Therefore, without loss of generality we can assume that
\begin{align*}
 &v_n \rightharpoonup v_* \qquad \mbox{ weakly in } W^{1,p}(\Omega), \\
 &t_{1,n} \to t_*
\end{align*}
for some nonnegative $v_*\in B_1$ and $0\le t_*<\infty$.

Since $c_1(\lambda) < 0$, we must have $0<t_*<\infty$. In particular, equation \eqref{e:bif_eq} implies that 
$v_*\not\equiv0$. By Lemma \ref{l:c1_dif} and the weak closedness of $B_1$, the infimum in \eqref{e:c1} is attained at $v_*$.

Entertaining Theorem \ref{t:spherical_fib} and Lemma \ref{l:c1_dif}, we conclude that $v_*\in S_1$ and that $u_* = 
t_1(v_*)v_*$ is a critical point of $J_\lambda$. Since $v_*\in S_1$, $J_\lambda(u_*) = I_1(v_*) < 0$.

\end{proof}

\section{Second critical point}\label{sec:min2}

Consider the continuously differentiable functional $\bar J_\lambda\colon W_0^{1,p}(\Omega)\to\mathbf R$ defined by
\[
 \bar J_\lambda(v) := \frac 1p \int_\Omega |\nabla v|^p + R_\lambda(v^+)
\]
where $v^+:=\max(v,0)$.

\begin{lemma}\label{l:c2_palais} The following statements are true.
\begin{itemize}
 \item[\rm(a)] $\bar J_\lambda$ satisfies the Palais-Smale condition for any $\lambda>0$.
 \item[\rm(b)] There exists an $\rho>0$ such that for any $\lambda\in(0,\Lambda)$,
\[
 \max\left( \inf_{B_\rho} \bar J_\lambda, \inf_{W_0^{1,p}\setminus B_\rho} \bar J_\lambda \right) < 0 < \inf_{S_\rho} 
\bar J_\lambda.
\]
\end{itemize}
\end{lemma}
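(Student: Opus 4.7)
The plan is to treat the two parts separately. Part (a) follows the standard Palais--Smale recipe for a subcritical functional, while part (b) amounts to tuning $\rho$ and $\Lambda$ so that a mountain-pass geometry appears for $\bar J_\lambda$. I expect the main obstacle to be the strong-convergence step in (a), where one must pass from weak to norm convergence under the $p$-Laplacian; the estimates in (b) are routine provided the thresholds are chosen in the right order ($\rho$ first, then $\Lambda$).

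For part (a), since $R_\lambda$ is bounded below on $W_0^{1,p}(\Omega)$ by the proof of Lemma \ref{l:coercive}, applying that bound at $v^+$ shows $\bar J_\lambda$ is coercive, so any $\mathrm{(PS)}_c$-sequence $(u_n)$ is bounded and, along a subsequence, $u_n \rightharpoonup u$ in $W_0^{1,p}(\Omega)$. Because every exponent in $R_\lambda$ is strictly below $p^*$, Rellich--Kondrachov yields strong convergence $u_n \to u$ in $L^\alpha$, $L^\beta$, and $L^\gamma$, hence $u_n^+ \to u^+$ in the same spaces. Testing $D\bar J_\lambda(u_n) \to 0$ against $u_n - u$ and absorbing the three lower-order terms via these strong convergences reduces the problem to showing
\[
 \int_\Omega \bigl( |\nabla u_n|^{p-2}\nabla u_n - |\nabla u|^{p-2}\nabla u \bigr)\cdot\nabla(u_n - u) \to 0,
\]
and the $(S_+)$ property of $-\Delta_p$ on $W_0^{1,p}(\Omega)$ then upgrades weak convergence to strong.

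For part (b), Sobolev embedding, dropping the nonnegative $C$-term for a lower bound, gives constants $c_1, c_2 > 0$ independent of $v$ and $\lambda$ with $\bar J_\lambda(v) \ge \tfrac{1}{p}\|v\|^p - c_1\lambda\|v\|^\alpha - c_2\|v\|^\beta$. Since $p<\beta$, I first fix $\rho>0$ small enough that $\eta := \tfrac{1}{p}\rho^p - c_2\rho^\beta > 0$; since $\alpha<p$, I then shrink $\Lambda$ so that $c_1\Lambda\rho^\alpha < \eta/2$, forcing $\inf_{S_\rho}\bar J_\lambda \ge \eta/2 > 0$ for all $\lambda\in(0,\Lambda)$. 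For the negativity pieces I use the distinguished nonnegative direction $v^*\in S_1$ produced in the proof of Lemma \ref{l:J_prop}; since $v^*\ge 0$, one has $\bar J_\lambda(tv^*) = \hat J_\lambda(t,v^*)$ for every $t\ge 0$. The expansion of $\hat J_\lambda(\cdot,v^*)$ near $t=0$ is dominated by $-\tfrac{\lambda}{\alpha}t^\alpha A(v^*)<0$, delivering $\inf_{B_\rho}\bar J_\lambda < 0$. For the complement, Lemma \ref{l:J_prop}(c) supplies $J_\lambda(t_3(v^*)v^*)<0$; reading the large-$t$ balance in the fibering equation \eqref{e:bif_eq} gives $t_3(v^*) \sim (B(v^*)/(\lambda C(v^*)))^{1/(\gamma-\beta)} \to \infty$ as $\lambda\downarrow 0$, so a final reduction of $\Lambda$ places $t_3(v^*) > \rho$ and finishes the proof.
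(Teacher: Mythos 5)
Your proposal is correct, but both halves take a visibly different route from the paper's. In part (a) the paper does not test against $u_n-u$ at all: it writes $D\bar J_\lambda(u_n)=-\Delta_p u_n+DR_\lambda(u_n^+)$, observes that $DR_\lambda$ is compact under (H0) and that $-\Delta_p\colon W_0^{1,p}(\Omega)\to W^{-1,p'}(\Omega)$ is a homeomorphism, and concludes from $u_n=-K_\lambda(u_n^+)+o(1)$ with $K_\lambda:=(-\Delta_p)^{-1}DR_\lambda$ compact that a subsequence converges strongly. Your $(S_+)$-property argument is the other standard mechanism; it is equally valid and arguably more self-contained, since it does not lean on continuity of the (nonlinear) inverse of $-\Delta_p$, whereas the paper's version dispatches the limit in one line once compactness of $DR_\lambda$ is granted. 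In part (b) the paper makes no new estimates: it takes $\rho$ to be the maximizer of the left-hand side of \eqref{e:J_pos}, so that $\inf_{S_\rho}\bar J_\lambda>0$ and $\inf_{B_\rho}\bar J_\lambda<0$ fall out of \eqref{e:J_pos} directly, and gets $\inf_{W_0^{1,p}\setminus B_\rho}\bar J_\lambda<0$ from the observation that the minimizer of the left-hand side of \eqref{e:J_neg} lies beyond $\rho$ --- crucially, with no further reduction of $\Lambda$. Your version re-derives the geometry from scratch via Sobolev bounds and shrinks $\Lambda$ twice more. That is fine in substance, but note the bookkeeping: the lemma is stated for the $\Lambda$ already fixed in Lemma \ref{l:J_prop}, so you should either say explicitly that $\Lambda$ is replaced by the smaller value throughout (harmless, since all earlier conclusions survive shrinking $\Lambda$), or choose $\rho$ as the paper does so that the inequalities already secured in \eqref{e:J_pos}--\eqref{e:J_neg} suffice. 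Your large-$t$ asymptotics $t_3(v^*)\sim(B(v^*)/(\lambda C(v^*)))^{1/(\gamma-\beta)}$ for placing $t_3(v^*)v^*$ outside $B_\rho$ is a correct, if slightly heavier, substitute for the paper's one-line comparison of the two extremal points.
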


\begin{proof}

(a): Fix any $c\in\mathbf R$ and a $\mathrm{(PS)}_c$-sequence $(u_n)$ for $\bar J_\lambda$:
\begin{align}
 &\bar J_\lambda(u_n) \to c, \label{e:c2_1} \\ 
 &D\bar J_\lambda(u_n) \to 0 \qquad \mbox{ in } W^{-1,p'}(\Omega).\label{e:c2_2}
\end{align}
Mimicking the proof of Lemma \ref{l:coercive}, one verifies that the functional $\bar J_\lambda$ is coercive. Therefore 
\eqref{e:c2_1} implies that $(u_n)$ is bounded.

Since $\Delta_p\colon W_0^{1,p}(\Omega) \to W^{-1,p'}(\Omega)$ is a homeomorphism, we can write
\[
 (-\Delta_p)^{-1} D\bar J_\lambda(u_n) = u_n + (-\Delta_p)^{-1} DR_\lambda(u_n^+).
\]
By our assumptions (H0) on the exponents, $DR_\lambda\colon W_0^{1,p}(\Omega) \to W^{-1,p'}(\Omega)$ is a compact 
operator, and hence so is the operator $K_\lambda := (-\Delta_p)^{-1} DR_\lambda \colon W_0^{1,p}(\Omega) \to 
W_0^{1,p}(\Omega)$. Taking into account \eqref{e:c2_2}, we obtain
\[
 o(1) = u_n + K_\lambda(u_n^+).
\]
Since $(u_n)$ is bounded and $K_\lambda$ is compact, we conclude that $(u_n)$ contains a (strongly) convergent 
subsequence in $W_0^{1,p}(\Omega)$.

(b): Denote by $\rho>0$ the value of $t$ where the left hand side of \eqref{e:J_pos} attains its maximum. It is clear 
that
\[
 \inf_{B_\rho} \bar J_\lambda < 0 < \inf_{S_\rho} \bar J_\lambda.
\]
Since the value of $t$ where the left hand side of \eqref{e:J_neg} attains its minimum is $>\rho$, we 
also deduce that
\[
 \inf_{W_0^{1,p}(\Omega)\setminus B_\rho} \bar J_\lambda < 0.
\]

\end{proof}

\begin{theorem}\label{t:c2} For any $\lambda\in(0,\Lambda)$, the functional $J_\lambda$ has a critical point 
$u_*\ge0$ with $J_\lambda(u_*) > 0$.
\end{theorem}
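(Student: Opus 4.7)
The plan is to apply the Mountain Pass Theorem (Theorem \ref{t:mountain_pass}) to the truncated functional $\bar J_\lambda$ and then show that the resulting critical point is nonnegative, which automatically upgrades it to a critical point of the original $J_\lambda$ with the desired positive energy level.

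For the first step, the hypotheses of Theorem \ref{t:mountain_pass} are essentially handed to us by Lemma \ref{l:c2_palais}. Part (a) supplies the Palais-Smale condition. For the mountain-pass geometry, part (b) yields a radius $\rho>0$ with $\inf_{S_\rho}\bar J_\lambda>0$ and with a strictly negative infimum on $W_0^{1,p}(\Omega)\setminus B_\rho$; fixing any $w$ in this exterior with $\bar J_\lambda(w)<0$ and noting that $\bar J_\lambda(0)=0$, the requirement $\max(\bar J_\lambda(0),\bar J_\lambda(w))<\inf_{S_\rho}\bar J_\lambda$ is met. Theorem \ref{t:mountain_pass} then produces a critical point $u_*$ of $\bar J_\lambda$ at the min-max level $c$, and since every admissible path connects $0$ to $w$ with $\|w\|>\rho$, each such path intersects $S_\rho$; hence $c\ge\inf_{S_\rho}\bar J_\lambda>0$.

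The second step is to show $u_*\ge0$. Because $R_\lambda$ is evaluated at $u_*^+$ in the definition of $\bar J_\lambda$, the nonlinear part of $D\bar J_\lambda(u_*)$ acting on any test function vanishes on $\{u_*\le0\}$. Testing the identity $D\bar J_\lambda(u_*)=0$ against $\phi:=u_*^-=\max(-u_*,0)\in W_0^{1,p}(\Omega)$ therefore kills the $R_\lambda$-contribution, and the $p$-Laplacian part reduces via $\nabla u_*\cdot\nabla u_*^-=-|\nabla u_*^-|^2$ a.e.\ to
\[
 0 = -\int_\Omega |\nabla u_*^-|^p,
\]
forcing $u_*^-\equiv 0$. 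With $u_*=u_*^+$, the functionals $\bar J_\lambda$ and $J_\lambda$ and their Fr\'echet derivatives agree at $u_*$, so $DJ_\lambda(u_*)=0$ and $J_\lambda(u_*)=c>0$.

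The only nonroutine ingredient is the nonnegativity argument, which hinges entirely on the truncation built into $\bar J_\lambda$. Working with $J_\lambda$ directly would be problematic: the concave term $-(\lambda/\alpha)\int|u|^\alpha$ allows mountain-pass paths to dip below zero through sign-changing functions, so a critical point produced by MPT applied to $J_\lambda$ itself need not be signed. The truncation is precisely the device that reconciles mountain-pass geometry with the requirement of a nonnegative solution.
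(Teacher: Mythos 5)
Your proposal is correct and follows essentially the same route as the paper: apply the Mountain Pass Theorem to the truncated functional $\bar J_\lambda$ using Lemma \ref{l:c2_palais} for the Palais--Smale condition and the geometry, obtain a positive min-max level since every admissible path crosses $S_\rho$, and then test $D\bar J_\lambda(u_*)=0$ against $u_*^-$ to conclude $u_*\ge0$ and hence that $u_*$ is a critical point of $J_\lambda$ itself. The only difference is a harmless sign convention for $u_*^-$ in the intermediate identity; the conclusion $\int_\Omega|\nabla u_*^-|^p=0$ is the same.
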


\begin{proof}

According to statement (b) of Lemma \ref{l:c2_palais}, there is an $\rho>0$ and an $w\in 
W_0^{1,p}(\Omega)\setminus B_\rho$ such that
\[
 \max(\bar J_\lambda(0), \bar J_\lambda(w)) = 0 < \inf_{S_\rho} \bar J_\lambda.
\]
Since, by statement (a) of the same Lemma, $\bar J_\lambda$ satisfies the Palais-Smale condition, we conclude from the 
Mountain Pass Theorem \ref{t:mountain_pass} that $\bar J_\lambda$ has a critical point $u_*$ such that
\[
 \bar J_\lambda(u_*) = \inf_{\gamma\in\Gamma} \max_{t\in[0,1]} \bar J_\lambda(\gamma(t)) > 0
\]
where
\[
 \Gamma := \{ \gamma\in C([0,1],W_0^{1,p}(\Omega)) \mid \gamma(0)=0\;\mbox{and}\;\gamma(1)=w \}.
\]

To finish the proof, it suffices to show that $u_*\ge0$ because then $u_*$ will be a critical point of the original 
functional $J_\lambda$. Since
\[
 0 = D\bar J_\lambda(u_*) u_*^- = \int_\Omega |\nabla u_*|^{p-2} \nabla u_* \cdot \nabla u_*^- = \int_\Omega |\nabla 
u_*^-|^p
\]
and $u_*\in W_0^{1,p}(\Omega)$, we must have $u_*^-=0$ or, equivalently, that $u_*\ge0$.

\end{proof}

\section{Third critical point}\label{sec:min3}

Define the functional $I_3\colon\mathscr U_\lambda \to \mathbf R$ by
\[
 I_3(v) := \hat J_\lambda(t_3(v),v) = \min_{t>t_2(v)} \hat J_\lambda(t,v).
\]

\begin{lemma}\label{l:c3_dif} The functional $t_3$ is continuously differentiable and weakly continuous on $\mathscr 
U_\lambda$.
\end{lemma}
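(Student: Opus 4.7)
The plan is to follow the same strategy as in Lemma~\ref{l:c1_dif}, i.e.\ apply the Implicit Function Theorem to $\Phi_\lambda(t,v)=0$ at $t=t_3(v)$. To do so, I must first establish the analogue of \eqref{e:min1} at $t_3$, namely that $\frac{\partial}{\partial t}\Phi_\lambda(t_3(v),v)>0$ for every $v\in\mathscr U_\lambda$.

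To see this, note that by (H0) the exponents of $t$ in $\Phi_\lambda(\cdot,v)$ satisfy $0<p-\alpha<\beta-\alpha<\gamma-\alpha$, so
\[
 \lim_{t\downarrow 0}\Phi_\lambda(t,v)=-\lambda A(v)<0
 \qquad\text{and}\qquad
 \lim_{t\to\infty}\Phi_\lambda(t,v)=+\infty.
\]
When $v\in\mathscr U_\lambda$ there are three solutions $t_1(v)<t_2(v)<t_3(v)$, which must be simple because the total number of roots is bounded by three. Combining the two boundary limits with the intermediate sign changes forces the derivative pattern $(+,-,+)$ at $(t_1,t_2,t_3)$, giving in particular $\frac{\partial}{\partial t}\Phi_\lambda(t_3(v),v)>0$. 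The Implicit Function Theorem, together with the smooth dependence of $\Phi_\lambda$ on $v$ through $A,B,C$, then yields that $t_3$ is continuously differentiable on $\mathscr U_\lambda$.

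For weak continuity, suppose $v_n\rightharpoonup v$ in $W_0^{1,p}(\Omega)$ with $v_n,v\in\mathscr U_\lambda$. By the Rellich--Kondrachov theorem (since $\alpha,\beta,\gamma<p^*$), the functionals $A,B,C$ are weakly continuous, so the coefficients of the single-variable function $\Phi_\lambda(\cdot,v_n)$ converge to those of $\Phi_\lambda(\cdot,v)$ uniformly on compact subsets of $(0,\infty)$; the same holds for $\frac{\partial}{\partial t}\Phi_\lambda$. Since $t_3(v)$ is a simple zero with positive derivative, a standard perturbation argument (or direct application of the one-variable Implicit Function Theorem at the fixed point $(t_3(v),v)$) produces, for all $n$ sufficiently large, a unique zero $\tau_n$ of $\Phi_\lambda(\cdot,v_n)$ in a small neighbourhood of $t_3(v)$, with $\tau_n\to t_3(v)$. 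It remains to identify $\tau_n$ with $t_3(v_n)$, which I would do by observing that, on any interval $[t_3(v)+\delta,\infty)$, the uniform convergence together with the asymptotic $\Phi_\lambda(t,v_n)\to+\infty$ rules out any additional roots beyond $t_3(v)$, so $\tau_n$ must be the largest root of $\Phi_\lambda(\cdot,v_n)$, i.e.\ $\tau_n=t_3(v_n)$.

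The main technical obstacle is the last identification step: one must exclude the possibility that, along the sequence, the root count jumps (say, $t_2(v_n)$ and $t_3(v_n)$ merging or splitting off from $t_3(v)$ in the limit). This is handled by exploiting that $t_2(v)<t_3(v)$ are distinct simple zeros of the limiting polynomial, so the same perturbation argument applied at $t_2(v)$ gives nearby zeros $t_2(v_n)\to t_2(v)$, keeping them separated from $\tau_n$. The rest is routine: continuous differentiability combined with sequential weak continuity gives the full statement.
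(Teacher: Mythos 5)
Your proposal is correct and takes essentially the same route as the paper: the paper's proof consists precisely of the observation that $\left.\frac{\partial}{\partial t}\right|_{t=t_3(v)}\Phi_\lambda(t,v)>0$ on $\mathscr U_\lambda$, followed by the Implicit Function Theorem and the weak continuity of $A$, $B$, $C$. You simply supply the details the paper leaves implicit --- the sign-pattern/simplicity argument for the positivity of the derivative at the third root, and the identification of the perturbed root with $t_3(v_n)$ in the weak-continuity step --- both of which are sound.
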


\begin{proof}

The statement of the lemma follows from the inequality
\[
 \left. \frac \partial{\partial t} \right|_{t=t_3(v)} \Phi_\lambda(t,v) > 0, \qquad v\in\mathscr U_\lambda
\]
the Implicit Function Theorem, and the weak continuity of $A,B$, and $C$.

\end{proof}

\begin{theorem}\label{t:c3} For any $\lambda\in(0,\Lambda)$, there is a nonnegative $v_*\in S_1\cap\mathscr U_\lambda$ 
such that $u_* = t_3(v_*)v_*$ is a critical point of $J_\lambda$ with $J_\lambda(u_*)<0$.
\end{theorem}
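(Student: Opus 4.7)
The plan is to reproduce the direct-minimization strategy of Theorem \ref{t:c1} at the third branch. Set
\[
 c_3(\lambda) := \inf_{v \in \mathscr U_\lambda \cap B_1} I_3(v).
\]
Lemma \ref{l:J_prop}(c) produces some $w \in S_1 \cap \mathscr U_\lambda$ with $J_\lambda(t_3(w)w) < 0$; because $I_3(w) = \hat J_\lambda(t_3(w), w) = J_\lambda(t_3(w)w)$ when $\|w\|=1$, this gives $c_3(\lambda) < 0$. I would then pick a nonnegative minimizing sequence $(v_n) \subset \mathscr U_\lambda \cap B_1$ and write $t_{3,n} := t_3(v_n)$, $u_n := t_{3,n} v_n$.

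For compactness I would combine coercivity of $J_\lambda$ (Lemma \ref{l:coercive}) with the bound $J_\lambda(u_n) \le \hat J_\lambda(t_{3,n}, v_n) \to c_3$ to see that $(u_n)$ is bounded in $W_0^{1,p}(\Omega)$. Multiplying $\Phi_\lambda(t_{3,n}, v_n) = 0$ by $t_{3,n}^\alpha$ and rewriting integrals of $v_n$ as integrals of $u_n$ yields the identity
\[
 t_{3,n}^p = B(u_n) + \lambda A(u_n) - \lambda C(u_n),
\]
which shows that $t_{3,n}$ is bounded as well. Passing to a subsequence, $v_n \rightharpoonup v_*$ in $W_0^{1,p}(\Omega)$ and strongly in $L^q(\Omega)$ for every $q \in (1,p^*)$, while $t_{3,n} \to t_* \ge 0$. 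Strong $L^q$ convergence lets me pass to the limit in $\Phi_\lambda(t_{3,n}, v_n) = 0$ to get $\Phi_\lambda(t_*, v_*) = 0$. I can then exclude $t_* = 0$ (otherwise $u_n \to 0$ and $I_3(v_n) \to 0$, against $c_3 < 0$) and $v_* \equiv 0$ (otherwise $\Phi_\lambda(t_*, 0) = t_*^{p-\alpha} \ne 0$).

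The main obstacle is to verify $v_* \in \mathscr U_\lambda$, since that set is only open, not weakly closed. Along a subsequence the three roots $t_1(v_n) < t_2(v_n) < t_3(v_n)$ converge to $0 < s_1 \le s_2 \le s_3 = t_*$, all zeros of $\Phi_\lambda(\cdot, v_*)$ counted with multiplicity; positivity of $s_1$ follows from $\Phi_\lambda(0, v_*) = -\lambda A(v_*) < 0$. I would rule out coalescence energetically using Lemma \ref{l:J_prop}(c). The strict inequality $\hat J_\lambda(t_2(v_n), v_n) > 0$ passes, by joint continuity of $\hat J_\lambda$ under $v_n \rightharpoonup v_*$ and $t_{2,n} \to s_2$, to $\hat J_\lambda(s_2, v_*) \ge 0$. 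If $s_2 = s_3$, this same limit must also equal $\lim_n \hat J_\lambda(t_{3,n}, v_n) = c_3 < 0$, which is impossible. If instead $s_1 = s_2 < s_3$, then $\Phi_\lambda(\cdot, v_*)$ is strictly negative on $[0, s_2)$ (negative at $0$, no earlier root), so $\hat J_\lambda(\cdot, v_*)$ is strictly decreasing on $[0, s_2]$ and $\hat J_\lambda(s_2, v_*) < \hat J_\lambda(0, v_*) = 0$, again contradicting $\hat J_\lambda(s_2, v_*) \ge 0$. Hence $s_1 < s_2 < s_3$, so $v_* \in \mathscr U_\lambda$ with $t_3(v_*) = t_*$.

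With $v_* \in \mathscr U_\lambda$ secured, Theorem \ref{t:spherical_fib} applied with $\mathscr U = \mathscr U_\lambda$ and $t = t_3$, whose regularity is provided by Lemma \ref{l:c3_dif}, yields $v_* \in S_1$ and the fact that $u_* = t_3(v_*) v_*$ is a critical point of $J_\lambda$. Since $v_* \in S_1$, I then have $J_\lambda(u_*) = \hat J_\lambda(t_3(v_*), v_*) = I_3(v_*) = c_3(\lambda) < 0$, and $v_* \ge 0$ is inherited from the nonnegativity of the minimizing sequence.
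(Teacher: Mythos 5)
Your proposal is correct and follows essentially the same route as the paper: minimize $I_3$ over $\mathscr U_\lambda\cap B_1$, pass to a weak limit, and use the sign information from Lemma \ref{l:J_prop}(c) --- namely $\hat J_\lambda(t_2(v_n),v_n)>0$ in the limit versus $\hat J_\lambda(t_*,v_*)=c_3<0$, together with the negativity of $\hat J_\lambda(\cdot,v_*)$ near $t=0$ coming from $\alpha$ being the smallest exponent --- to keep the three roots distinct and conclude $v_*\in\mathscr U_\lambda$ before invoking Theorem \ref{t:spherical_fib}. The only cosmetic differences are that you bound $t_{3,n}$ via the identity obtained by multiplying $\Phi_\lambda(t_{3,n},v_n)=0$ by $t_{3,n}^\alpha$ (the paper appeals directly to coercivity), and you phrase the non-degeneracy step as excluding coalescence of roots rather than directly exhibiting the first root $t_1\in(0,t_2)$; both rest on the same inequalities.
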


\begin{proof}

By statement (c) of Lemma \ref{l:J_prop},
\begin{equation}\label{e:c3}
 c_3(\lambda) := \inf_{v\in\mathscr U_\lambda\cap B_1} I_3(v) < 0.
\end{equation}

Let $(v_n)$ be a minimizing sequence for problem \eqref{e:c3} which we may assume to be nonnegative. Let $t_{3,n} 
:= t_3(v_n)$. Then we can assume that
\begin{align}
 &v_n \rightharpoonup v_* \qquad \mbox{ weakly in } W_0^{1,p}(\Omega), \\
 &t_{3,n} \to t_3
\end{align}
for some nonnegative $v_*\in B_1$ and $0\le t_3\le\infty$.

Since, by Lemma \ref{l:coercive}, $J_\lambda$ is coercive and $I_3(v_n) \ge J_\lambda(t_{3,n}v_n)$, we must have 
$t_3<\infty$. Entertaining \eqref{e:c3}, we also obtain that $0<t_3$. In other words, $0<t_3<\infty$. Taking into 
account equation \eqref{e:bif_eq}, we infer that $v_*\not\equiv0$.

Let us show that $v_*\in\mathscr U_\lambda$. Let $t_{2,n} := t_2(v_n)$. Without loss of generality, we may 
assume that
\[
 t_{2,n} \to t_2
\]
for some $0\le t_2\le t_3$. Since $v_*\not\equiv0$, we deduce from \eqref{e:bif_eq} that $t_2\neq0$ and hence
\[
 0 < t_2 \le t_3.
\]
It is also clear that both $t_2$ and $t_3$ are solutions to equation \eqref{e:bif_eq} for $v=v_*$:
\[
 \Phi_\lambda(t_i,v_*) = 0, \qquad i=2,3.
\]

By statement (c) of Lemma \ref{l:J_prop},
\[
 0 < \frac {t_{2,n}^p}p + R_\lambda(t_{2,n}v_n), \qquad n\ge1,
\]
yielding that
\begin{equation}\label{e:c3_t2}
 0 \le \frac {t_2^p}p + R_\lambda(t_2v_*).
\end{equation}
Since
\[
 \frac {t_3^p}p + R_\lambda(t_3v_*) = c_3(\lambda) < 0,
\]
we deduce that
\[
 0 < t_2 < t_3.
\]

Since $\alpha$ is the smallest exponent, we deduce from \eqref{e:c3_t2} that equation \eqref{e:bif_eq} for 
$v=v^*$ has a third solution, $t_1\in(0,t_2)$, satisfying
\[
 \frac {t_1^p}p + R_\lambda(t_1v_*) < 0.
\]
Therefore, equation \eqref{e:bif_eq} for $v=v_*$ has three solutions, yielding that $v_*\in\mathscr U_\lambda$. So, we 
deduce that the infimum in \eqref{e:c3} is attained at $v_*$.

Applying Theorem \ref{t:spherical_fib} and Lemma \ref{l:c3_dif}, we conclude that $v_*\in S_1$ and that 
$u_*=t_3(v_*)v_*$ is a critical point of $J_\lambda$. Since $v_*\in S_1$, $J_\lambda(u_*) = I_3(v_*) < 0$.

\end{proof}

\section{Proof of the main result}\label{sec:proof}

In this section, we prove the main result of this paper, Theorem \ref{t:main}. We will also need the following 
regularity result (cf. \cite{kandilakis}).

\begin{theorem}\label{t:regularity} Let $\Omega\subset\R^N$ be a bounded domain with $C^2$ boundary and let $u\in 
W_0^{1,p}(\Omega)$ be a weak solution of the problem
\[
\left\{
\begin{aligned}
 -\Delta_p u &= f(\cdot,u) && \mbox{ in } \Omega, \\
 u &= 0 && \mbox{ on } \partial\Omega.
\end{aligned}
\right.
\]
where $f\colon\Omega\times\mathbf R$ is a Carath\'eodory function that satisfies the inequality
\[
 |f(x,u)| \le c (1 + |u|^{m-1}) \qquad \mbox{ for a.e. } x\in\Omega \mbox{ and all } u\in\R,
\]
where $m=p^*$ if $p<N$ and $m>1$ if $p\ge N$. Then $u\in C^{1,\mu}(\bar\Omega)$ for some $\mu\in(0,1)$.
\end{theorem}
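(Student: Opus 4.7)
The theorem is quoted directly from \cite{kandilakis}, so the natural strategy is to record the standard two-step regularity program: first upgrade the weak solution $u$ to an $L^\infty$-function, and then invoke the $C^{1,\mu}$-regularity theory of Tolksdorf, DiBenedetto, and Lieberman for the $p$-Laplacian with bounded right-hand side on $C^2$ domains.

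For the first step I would run a Moser iteration. Plugging truncated test functions of the form $\varphi = u\,\min(|u|,M)^{sp}$, $s \ge 0$, $M > 0$, into the weak formulation, using the growth bound $|f(x,u)| \le c(1+|u|^{m-1})$ on the right and the Sobolev embedding $W_0^{1,p}(\Omega) \hookrightarrow L^{p^*}(\Omega)$ on the left, one derives a reverse-H\"older-type inequality of the schematic form
\[
 \|u\|_{L^{(s+1)p^*}(\Omega)} \le C(s)\bigl(1 + \|u\|_{L^{(s+1)p}(\Omega)}\bigr),
\]
after passing $M \to \infty$. Iterating along a geometric sequence of exponents, starting from $u \in L^{p^*}(\Omega)$ supplied for free by Sobolev, yields a uniform bound in every $L^q(\Omega)$ and hence $u \in L^\infty(\Omega)$. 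The regime $p \ge N$ is easier because $m$ is at our disposal.

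Once $u$ is bounded, the right-hand side $g(x) := f(x,u(x))$ lies in $L^\infty(\Omega)$, so $u$ solves the Dirichlet problem $-\Delta_p u = g$ with $g \in L^\infty(\Omega)$ on a domain with $C^2$ boundary. At this point I would invoke Lieberman's boundary $C^{1,\mu}$-regularity theorem as a black box to conclude $u \in C^{1,\mu}(\bar\Omega)$ for some $\mu \in (0,1)$.

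The main obstacle is the Moser iteration in the borderline subcritical case $m = p^*$: at the critical Sobolev exponent the constant produced by a single iteration step is not uniformly controlled by a geometric factor, because the bad term $\int |u|^{p^*}\varphi$ competes directly with the left-hand side. The fix is a Brezis-Kato-type argument: one first exploits absolute continuity of the integral $\int_{\{|u|>K\}} |u|^{p^*}$ to make this term small for $K$ large, absorbs it into the left, and only then sets up the iteration. Everything past this point, including the step from $L^\infty$ to $C^{1,\mu}$, is a standard black-box application.
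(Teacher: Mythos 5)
The paper gives no proof of this theorem at all: it is quoted as a known result with a pointer to \cite{kandilakis}, so there is nothing internal to compare against. Your two-step outline --- an $L^\infty$ bound via Moser iteration, with the Brezis--Kato absorption trick to handle the critical growth $m=p^*$, followed by the Lieberman/Tolksdorf/DiBenedetto $C^{1,\mu}$ theory for the $p$-Laplacian with bounded right-hand side --- is the standard and correct argument, and is essentially what the cited source carries out.
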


We now move on to proving the main result.

\begin{proof}[Proof of Theorem \ref{t:main}]

Denote by $\lambda^*$ the number $\Lambda$ in Lemma \ref{l:J_prop} and fix any $\lambda\in(0,\lambda^*)$.

Let $u_1,u_2,u_3$ be functions $u_*$ in Theorem \ref{t:c1}, Theorem \ref{t:c2}, and Theorem \ref{t:c3}, respectively. 
Since they are critical points of the energy functional, they are weak solutions of problem \eqref{e:main}. Since 
$J_\lambda(u_i) < 0 < J_\lambda(u_2)$ for $i=1,3$, $u_2\neq u_1$ and $u_2\neq u_3$. Let us show that $u_1\neq u_3$. If 
this was not the case, we would have $v_1=v_3$ and $t_1(v_1)=t_3(v_3)$. But then that would imply $t_1(v_3) = t_1(v_1) 
= t_3(v_3)$, contradicting the definition of $\mathscr U_\lambda$.

Applying Theorem \ref{t:regularity}, we infer that $u_1,u_2,u_3\in C^{1,\mu}(\bar\Omega)$ for some $\mu\in(0,1)$, which 
completes our proof.

\end{proof}

\end{document}